
\documentclass{article}
\usepackage{amsmath,graphicx,mlspconfarxiv}
\usepackage{stylefile}

\newcommand{\DACCORD}[0]{{Hydra$^2$} }






\newcommand{\bT}{\mathbf{T}}

\DeclareRobustCommand{\EX}[2][{\mathbb{E}}]{\ensuremath {#1}\left[ {#2} \right]}

\newcommand{\todo}[1]{{\color{red}#1}}
\newcommand{\Exp}{\mathbb{E}}
\DeclareMathOperator{\Diag}{Diag}


\title{Fast Distributed Coordinate Descent for Non-Strongly Convex Losses\sthanks{Thanks to EPSRC for funding via grants EP/K02325X/1, EP/I017127/1 and EP/G036136/1.}}
%
\name{
\begin{tabular}{c}
 Olivier Fercoq \quad Zheng Qu \quad  Peter Richt\'arik\quad Martin Tak\'a\v{c} \\
\end{tabular}
}
\address{
School of Mathematics, University of Edinburgh, Edinburgh, EH9 3JZ,  United Kingdom
}

\begin{document}

\maketitle
\begin{abstract}
We propose an efficient  distributed randomized coordinate descent method for minimizing regularized non-strongly convex loss functions. The method attains the optimal $O(1/k^2)$ convergence rate, where $k$ is the iteration counter. The core of the work is the theoretical study of stepsize parameters. We have implemented the method on Archer---the largest supercomputer in the UK---and show that the method is capable of solving a (synthetic) LASSO optimization problem with 50 billion variables.
\end{abstract}

\begin{keywords}
Coordinate descent, distributed algorithms, acceleration.
\end{keywords}
\section{Introduction}
\label{sec:intro}

In this paper we are concerned with solving regularized convex optimization problems in \emph{huge dimensions} in cases when the loss being minimized is \emph{not} strongly convex. That is, we consider the problem
\begin{equation}\label{eq:MAIN1mslp}
\min_{x\in \R^d}  L(x) \eqdef f(x) + \Reg(x),
\end{equation}
where $f:\R^d\rightarrow \R$ is a convex differentiable loss function, $d$ is huge, and $\Reg(x)\equiv\sum_{i=1}^d \Reg_i(x^i)$,
where $\Reg_i: \R \rightarrow \R\cup\{+\infty\}$
are (possibly nonsmooth) convex regularizers and $x^i$ denotes the $i$-th coordinate of $x$. We make the following technical assumption about $f$: there exists a $n$-by-$d$ matrix $\bA $ such that for all $x,h\in \R^d$,
\begin{equation}\label{eq:kvadraticUpperbound}
 f(x+h) \leq f(x)+ (f'(x))^{\top} h +\tfrac{1}{2} h^{\top} \bA^\top \bA h.
\end{equation}
For examples of regularizers $R$ and loss functions $f$ satisfying the above assumptions, relevant to machine learning applications, we refer the reader to \cite{bradley2011, richtarik2012parallel,richtarik2013distributed}.

It is increasingly the case in modern applications that the data describing the problem (encoded in $\bA$ and $R$ in the above model) is so large that it does not fit into the RAM of a single computer. In such a case, unless the application at hand can tolerate slow performance due to frequent HDD reads/writes, it is often necessary to distribute the data among the nodes of a cluster and solve the problem in a distributed manner.

While efficient distributed methods exist in cases when the regularized loss $L$ is strongly convex (e.g., Hydra \cite{richtarik2013distributed}), here we assume that $L$ is \emph{not} strongly convex. Problems of this type arise frequently: for instance, in the SVM dual, $f$ is typically a non-strongly convex quadratic, $d$ is the number of examples, $n$ is the number of features, $\bA$ encodes the data, and $\Reg$ is a $0$-$\infty$ indicator function encoding box constraints (e.g., see \cite{richtarik2012parallel}). If $d > n$, then $L$ will not be strongly convex.

In this paper we propose ``Hydra$^2$'' (Hydra ``squared''; Algorithm~\ref{algo:accdistr}) -- the first distributed stochastic coordinate descent (CD) method with fast $O(1/k^2)$ convergence rate for our problem. The method can be seen both as a specialization of the APPROX algorithm \cite{fercoq2013accelerated} to the distributed setting, or as an accelerated version of the Hydra algorithm  \cite{richtarik2013distributed} (Hydra converges as $O(1/k)$ for our problem). The core of the paper forms the development of \emph{new stepsizes}, and new efficiently computable bounds on the stepsizes proposed for distributed CD methods in \cite{richtarik2013distributed}.  We show that Hydra$^2$ is able to solve a big data problem with $d$ equal to {\em 50 billion.}

%
%

\section{The Algorithm}\label{sec-algo}

Assume we have $c$ nodes/computers available. In Hydra$^2$ (Algorithm~\ref{algo:accdistr}), the coordinates $i\in [d] := \{1,2,\dots,d\}$ are first partitioned into $c$ sets $\{ {\cal P}_l,\; l=1,\dots,c\}$, each of size $|P_l| = s := d/c$. The columns of $\bA$ are partitioned accordingly, with those belonging to partition ${\cal P}_l$ stored on node $l$. During one iteration, all computers $l \in \{1,\dots,c\}$, in parallel, pick a subset $\hat{S}_l$ of $\tau$ coordinates from those they own, i.e., from ${\cal P}_l$, uniformly at random, where $1\leq \tau\leq s$ is a parameter of the method (Step~\ref{algline:generateS}). From now on we denote by  $\hat{S}$ the union of all these sets, $\hat{S}:=\cup_l \hat{S}_l$, and refer to it by the name \emph{distributed sampling}.

Hydra$^2$  maintains
two sequences of iterates: $u_k, z_k \in \R^d$. Note  that this is usually the case with accelerated/fast gradient-type algorithms~\cite{nesterov1983method, tseng2008accelerated, beck2009fista}. Also note that the output of the method is a linear combination of these two vectors. These iterates are stored and updated in a distributed way, with the $i$-th coordinate stored on computer $l$ if $i \in {\cal P}_l$.  Once computer $l$ picks $\hat{S}_l$, it computes for each $i \in \hat{S}_l$  an update scalar $t^i_k$, which is then used to update $z_k^i$ and $u_k^i$, in parallel (using the multicore capability of computer $l$).

\begin{algorithm}
\algsetup{indent=0.8em}
\algsetup{linenodelimiter=\!}
\begin{algorithmic}[1]
 \STATE \textbf{INPUT}:  $\{{\cal P}_l\}_{l=1}^c$, $1\leq \tau\leq s$, $\{\bD_{ii}\}_{i=1}^d$, $z_0\in \R^d$
\STATE set $\theta_0={\tau}/{s}$ and  $u_0=0$
\FOR{ $k \geq 0$}
\renewcommand{\algorithmicfor}{\textbf{for each}}
\renewcommand{\algorithmicdo}{\textbf{in parallel do}}
\renewcommand{\algorithmicendfor}{\textbf{end parallel for}}
\STATE  $z_{k+1} \leftarrow z_k$,\; $u_{k+1} \leftarrow u_k$
\FOR{ computer $l\in \{1,\dots,c\}$}
\STATE pick a random set of coordinates $\hat S_l \subseteq \pP_l$, $|\hat S_l|=\tau$ \label{algline:generateS}
\FOR{ $i\in  \hat S_l$}
\STATE $t_{k}^{i} = \displaystyle \arg\!\min_{t}  f_i' (\theta_k^2 u_k + z_k) t  + \tfrac{ s\theta_k  \bD_{ii}}{2\tau}t^2
+ \Reg_i(z_{k}^{i}+t)$ \label{algline:proxop} \vspace{-1ex}
\STATE $z_{k+1}^{i} \leftarrow z_{k}^{i} + t_{k}^{i}$, \; $u_{k+1}^{i} \leftarrow u_k^{i} - (\tfrac{1}{\theta_k^2}-\frac{s}{\tau \theta_k})t_{k}^{i}$ \label{algline:updateu}
\ENDFOR
\ENDFOR
\STATE $\theta_{k+1}=\tfrac{1}{2}(\sqrt{\theta_k^4+4 \theta_k^2} - \theta_k^2)$ \label{algline:theta_k}

\ENDFOR
\STATE OUTPUT: $\theta_{k}^2 u_{k+1} + z_{k+1}$
\end{algorithmic}
\caption{\DACCORD  }
\label{algo:accdistr}
\end{algorithm}

The main work is done in Step~\ref{algline:proxop} where the update scalars $t_k^i$ are computed.
This step is a  proximal forward-backward operation similar to what is to be done in FISTA~\cite{beck2009fista}
but we only compute the proximal operator for the coordinates in $\hat{S}$.

The partial derivatives are computed at $(\theta_k^2 u_k + z_k)$. For the algorithm to be efficient,
the computation should be performed \emph{without  computing the sum} $\theta_k^2 u_k + z_k$.
As shown in~\cite{fercoq2013accelerated},
this can be done for functions $f$ of the form
\begin{equation*}\textstyle{
f(x)=\sum_{j=1}^m \phi_j(e_j^T \bA x),}
\end{equation*}
where for all $j$, $\phi_j$ is a one-dimensional differentiable (convex) function.
Indeed, let $D_i$ be the set of such $j$ for which $\bA_{ji}\neq 0$. Assume we store and update $r_{u_k}=\bA u_k$ and $r_{z_k} = \bA z_k$, then
\begin{equation*}\textstyle{
  \nabla_i f(\theta_k^2 u_k+z_k) = \sum_{j \in D_i} \bA_{ji}^T \phi'_j(\theta_k^2 r_{u_k}^j + r_{z_k}^j ).}
\end{equation*}
The average cost for this computation is $\frac{\tau}{s}\sum_{i=1}^d {\cal O}(|D_i|)$, i.e., each processor
needs to access the elements of only one column of the matrix $\bA$.


Steps~\ref{algline:proxop}
and \ref{algline:updateu} depend on a deterministic scalar sequence $\theta_k$, which is being updated in Step~\ref{algline:theta_k} as in~\cite{tseng2008accelerated}. Note that by taking $\theta_k=\theta_0$ for all $k$, $u_k$ remains equal to 0, and Hydra$^2$ reduces to Hydra~\cite{richtarik2013distributed}.

The output of the algorithm is $x_{k+1}=(\theta_{k}^2 u_{k+1} + z_{k+1})$. We only need to compute
this vector sum at the end of the execution and when we want to track $L(x_k)$.
Note that one should not evaluate $x_k$ and $L(x_k)$ at each iteration
since these computations have a non-negligible cost.


%

\section{Convergence rate}\label{sec:CRA}

The magnitude of $\bD_{ii}>0$ directly influences the size of the update  $t_k^i$. In particular, note that when there is no regularizer ($R_i=0$), then
 $t_k^i = 2\tau f_i' (\theta_k^2 u_k + z_k)(s\theta_k\bD_{ii})^{-1}$. That is, small $\bD_{ii}$ leads to a larger ``step'' $t_k^i$ which is used to update $z_k^i$ and $u_k^i$.
For this reason we refer to $\{\bD_{ii}\}_{i=1}^d$ as \firstdef{stepsize parameters}.
Naturally, some technical assumptions on $\{\bD_{ii}\}_{i=1}^d$ should be made
in order to guarantee the convergence of the algorithm.  The so-called ESO (Expected Separable Overapproximation) assumption has been introduced
in this scope. For  $h\in \R^d$, denote $h^{\hat{S}} := \sum_{i \in \hat{S}} h^i e_i$, where $e_i$ is the $i$th unit coordinate vector.

\begin{assump}[ESO]\label{ass-eso}
Assume that for all  $x\in \R^d$ and $h\in\R^d$ we have
  \begin{equation}\label{eq:ESO}
 \EX{f(x+h^{\hat{S}})} \leq f(x) + \tfrac{\EX{|\hat S|}}{d}\big((f'(x))^{\top} {h} + \tfrac{1}{2} h^{\top} \bD h \big),
 \end{equation}
 where $\bD$ is a diagonal matrix with diag.\ elements $\bD_{ii}>0$ and $\hat{S}$ is the distributed sampling described above.
 \end{assump}
The above ESO  assumption involves the smooth function $f$, the sampling $\hat S$ and the parameters $\{\bD_{ii}\}_{i=1}^d$. It has been first introduced by
Richt\'{a}rik and Tak\'{a}\v{c}~\cite{richtarik2012parallel} for proposing a generic approach
in the convergence analysis of the Parallel Coordinate Descent Methods (PCDM).
Their generic approach boils down the convergence analysis of the whole class of PCDMs to the problem of finding proper parameters which make the ESO assumption hold.
The same idea has been extended to the analysis of many variants of PCDM, including the Accelerated Coordinate Descent algorithm~\cite{fercoq2013accelerated} (APPROX)
and the Distributed Coordinate Descent method~\cite{richtarik2013distributed} (Hydra).

In particular, the following complexity result, under the ESO assumption~\ref{ass-eso}, can be deduced from \cite[Theorem 3]{fercoq2013accelerated} using Markov inequality:
 \begin{thm}[\cite{fercoq2013accelerated}]\label{thm:main}
Let $x_*\in \R^d$ be any optimal point of \eqref{eq:MAIN1mslp},  $d_0\eqdef x_0-x_*$, $L_0\eqdef L(x_0)-L(x_*)$ and let \[C_1 \eqdef \left(1-\tfrac{\tau}{s}\right)L_0,\quad C_2 \eqdef \tfrac{1}{2}d_0^\top \bD d_0.\] Choose $0 <\rho < 1$ and $0<\epsilon < L_0$ and
\begin{equation}\label{eq-KC1C2}
k\geq \frac{2s}{\tau} \left(\sqrt{\frac{C_1+C_2}{\rho \epsilon}}-1\right)+1.
\end{equation}
Then under Assumption~\ref{ass-eso}, the iterates $\{x_k\}_{k\geq 1}$ of Algorithm~\ref{algo:accdistr} satisfy
$\Prob(L(x_k)-L(x_*)\leq \epsilon) \geq 1-\rho \enspace.$
\end{thm}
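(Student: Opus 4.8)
The plan is to obtain the high-probability bound in three moves: (a) invoke the expectation-form convergence rate of APPROX (\cite[Theorem~3]{fercoq2013accelerated}), specialized to the distributed sampling $\hat S$ and the stepsize matrix $\bD$; (b) control the deterministic sequence $\{\theta_k\}$ generated in Step~\ref{algline:theta_k} so as to make the rate explicit in the iteration counter $k$; (c) pass from expectation to probability by Markov's inequality. For move (a), I would first note that the distributed sampling satisfies $\Exp|\hat S| = c\tau$ and, since $d = cs$, $\Exp|\hat S|/d = \tau/s = \theta_0$, so that Steps~\ref{algline:proxop}--\ref{algline:theta_k} are exactly the APPROX iteration run with this sampling and with stepsize matrix $\bD$. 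Hence, under Assumption~\ref{ass-eso}, \cite[Theorem~3]{fercoq2013accelerated} applies and gives, for every $k\geq 1$,
\begin{equation*}
\Exp[L(x_k)-L(x_*)]\;\leq\;\theta_{k-1}^{2}\Big(\tfrac{1}{\theta_0^{2}}C_1 + C_2\Big)\;\leq\;\tfrac{\theta_{k-1}^{2}}{\theta_0^{2}}\,(C_1+C_2),
\end{equation*}
using $C_1=(1-\theta_0)L_0$, $C_2=\tfrac12 d_0^{\top}\bD d_0$ and $\theta_0=\tau/s\leq 1$ for the last step.

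For move (b), squaring the recursion $\theta_{k+1}=\tfrac12(\sqrt{\theta_k^4+4\theta_k^2}-\theta_k^2)$ yields the standard identity $\theta_{k+1}^{2}=(1-\theta_{k+1})\theta_k^{2}$, hence $1/\theta_k=\sqrt{1-\theta_{k+1}}/\theta_{k+1}$, and since $1-\sqrt{1-x}\geq x/2$ on $[0,1]$ we get $1/\theta_{k+1}-1/\theta_k\geq \tfrac12$. Telescoping from $\theta_0=\tau/s$ gives $1/\theta_{k-1}\geq s/\tau+(k-1)/2$, equivalently $\theta_{k-1}/\theta_0\leq \big(1+\tfrac{\tau(k-1)}{2s}\big)^{-1}$. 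Substituting into the bound above,
\begin{equation*}
\Exp[L(x_k)-L(x_*)]\;\leq\;\frac{C_1+C_2}{\big(1+\tfrac{\tau(k-1)}{2s}\big)^{2}}.
\end{equation*}

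For move (c), note that the threshold \eqref{eq-KC1C2} is exactly equivalent to $\big(1+\tfrac{\tau(k-1)}{2s}\big)^{2}\geq (C_1+C_2)/(\rho\epsilon)$, so under \eqref{eq-KC1C2} the displayed right-hand side is at most $\rho\epsilon$. Since $x_*$ is optimal, $L(x_k)-L(x_*)$ is a nonnegative random variable, and Markov's inequality gives $\Prob\big(L(x_k)-L(x_*)>\epsilon\big)\leq \Exp[L(x_k)-L(x_*)]/\epsilon\leq \rho$, hence $\Prob\big(L(x_k)-L(x_*)\leq\epsilon\big)\geq 1-\rho$, as claimed.

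I do not expect a genuine obstacle inside this proof: once \cite[Theorem~3]{fercoq2013accelerated} is taken as given, everything is bookkeeping. The points needing care are the identification $\theta_0=\Exp|\hat S|/d=\tau/s$ that licenses the use of APPROX for the distributed sampling; the fact that APPROX's two error terms carry \emph{different} powers of $\theta_0$ (namely $\theta_{k-1}^2/\theta_0^2$ and $\theta_{k-1}^2$), so $\theta_0\leq 1$ is used to coalesce them into the single constant $C_1+C_2$ at a mild loss; and the elementary induction controlling $\theta_{k-1}$. The real difficulty of the paper lies elsewhere --- in exhibiting (efficiently computable) stepsize parameters $\{\bD_{ii}\}$ for which Assumption~\ref{ass-eso} actually holds for the distributed sampling --- and is addressed in the following section. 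The hypothesis $0<\epsilon<L_0$ plays no role in the estimates and only keeps the statement non-vacuous and the iteration count positive.
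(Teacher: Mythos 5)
Your proposal is correct and follows exactly the route the paper itself indicates (it gives no proof, merely noting the theorem ``can be deduced from \cite[Theorem 3]{fercoq2013accelerated} using Markov inequality''): you instantiate APPROX's expectation bound with $\theta_0=\Exp|\hat S|/d=\tau/s$, bound $\theta_{k-1}$ via the standard $1/\theta_{k+1}-1/\theta_k\geq 1/2$ telescoping, and finish with Markov. The bookkeeping, including the coalescing of the two error terms into $C_1+C_2$ and the equivalence of \eqref{eq-KC1C2} with the resulting threshold, checks out.
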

The bound on the number of iterations $k$ in~\eqref{eq-KC1C2} suggests to choose the smallest possible $\{\bD_{ii}\}_{i=1}^d$ satisfying the ESO assumption~\ref{ass-eso}.

\section{New Stepsizes}
In this section we propose new stepsize parameters $\bD_{ii}$ for which
the ESO assumption is satisfied.

For any matrix $\bG\in \R^{d\times d}$, denote by  $D^{\bG}$ the diagonal matrix such that $D^\bG_{ii}= \bG_{ii}$ for all $i$
 and $D^\bG_{ij}=0$ for $i \neq j$.
We denote by $B^{\bG}\in \R^{d\times d}$  the block matrix
associated with the partition $\{\pP_1,\dots,\pP_c\}$ such that $B^{\bG}_{ij} = \bG_{ij}$ whenever $\{i,j\}\subset \pP_l$ for some $l$, and $B^\bG_{ij} = 0$ otherwise.

Let $\omega_j$ be the number of nonzeros
 in the $j$th row of $\bA$ and $\omega'_j$ be the number of ``partitions active at row $j$'', i.e.,
the number of indexes $l \in \{1,\dots,c\}$ for which the set $\{i \in {\cal P}_l : \bA_{ji} \neq 0\}$ is nonempty.
Note that since $\bA$ does not have an empty row or column, we know that $1\leq \omega_j\leq n$ and $1\leq \omega_j'\leq c$. Moreover,
 we have the following characterization of these two quantities. For notational convenience we denote $\bM_j = \bA_{j:}^\top \bA_{j:}$ for all
$j\in\{1,\dots,n\}$.
\begin{lemma}\label{l-dso} For $j\in\{1,\dots,n\}$ we have:
\begin{align}
\label{eq:omega_j}\omega_j= \max \{x^\top \bM_j x \;:\; x^\top D^{\bM_j} x \leq 1\},\\
\label{eq:omegaPRIME}\omega_j' = \max \{x^\top \bM_j x \;:\; x^\top B^{\bM_j} x \leq 1\}.
\end{align}
\end{lemma}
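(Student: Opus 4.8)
The plan is to observe that $\bM_j = \bA_{j:}^\top \bA_{j:}$ is a rank-one positive semidefinite matrix, namely $\bM_j = a a^\top$ where $a = \bA_{j:}^\top \in \R^d$ is the (transpose of the) $j$th row of $\bA$. Both quantities on the right-hand sides are then generalized Rayleigh quotients of the form $\max\{ x^\top \bM_j x : x^\top \bN x \le 1\}$ where $\bN$ is either $D^{\bM_j}$ or $B^{\bM_j}$; since $\bN$ is positive definite on the relevant subspace (the diagonal/block-diagonal of $aa^\top$ restricted to the support of $a$ has strictly positive diagonal entries exactly where $a$ is nonzero), each maximum equals the largest eigenvalue of $\bN^{-1/2}\bM_j \bN^{-1/2}$, which for a rank-one matrix is simply its trace, $\mathrm{tr}(\bN^{-1}\bM_j) = a^\top \bN^{-1} a$. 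So I would reduce both identities to evaluating $a^\top \bN^{-1} a$ for the two choices of $\bN$, being slightly careful about coordinates $i$ with $a^i = 0$ (restrict everything to the support $J := \{i : a^i \neq 0\}$, on which both $D^{\bM_j}$ and $B^{\bM_j}$ are genuinely positive definite, and note coordinates outside $J$ contribute nothing to $x^\top \bM_j x$).

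For \eqref{eq:omega_j}: here $\bN = D^{\bM_j}$ has diagonal entries $(\bM_j)_{ii} = (a^i)^2$, so on the support $J$ it is diagonal with entries $(a^i)^2$, and $a^\top D^{\bM_j,-1} a = \sum_{i \in J} (a^i)^2 / (a^i)^2 = |J| = \omega_j$, since $|J|$ is precisely the number of nonzeros in the $j$th row of $\bA$. Alternatively, and perhaps cleaner to write up, substitute $y^i = a^i x^i$: the constraint becomes $\sum_{i\in J} (y^i)^2 \le 1$ and the objective becomes $(\sum_{i\in J} y^i)^2$, whose maximum over the unit Euclidean ball is $|J| = \omega_j$ by Cauchy–Schwarz (equality at $y^i = 1/\sqrt{|J|}$). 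For \eqref{eq:omegaPRIME}: $B^{\bM_j}$ is block-diagonal across the partition $\{\pP_1,\dots,\pP_c\}$, with the block on $\pP_l$ equal to $a_l a_l^\top$ where $a_l$ is the restriction of $a$ to $\pP_l$. Writing $x = (x_1,\dots,x_c)$ blockwise, the constraint is $\sum_l (a_l^\top x_l)^2 \le 1$ and the objective is $(\sum_l a_l^\top x_l)^2$; setting $\xi_l := a_l^\top x_l$ for the blocks with $a_l \neq 0$ (there are exactly $\omega_j'$ such blocks, by definition of $\omega_j'$), this is $\max\{(\sum_l \xi_l)^2 : \sum_l \xi_l^2 \le 1\} = \omega_j'$ again by Cauchy–Schwarz. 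The blocks with $a_l = 0$ contribute zero to both objective and constraint and so are irrelevant.

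The only genuine subtlety — and the step I'd expect to need the most care in writing — is the degeneracy caused by zero coordinates: $D^{\bM_j}$ and $B^{\bM_j}$ are only positive \emph{semi}definite on all of $\R^d$, so one cannot blindly invoke "$\max = \lambda_{\max}(\bN^{-1}\bM_j)$." The change of variables $y^i = a^i x^i$ (resp. $\xi_l = a_l^\top x_l$) sidesteps this entirely: it is well-defined on the support, it makes the constraint a plain Euclidean-ball constraint, and it turns both problems into the elementary fact $\max\{ \langle \mathbf{1}, y\rangle^2 : \|y\|_2^2 \le 1\} = (\dim)$. I would therefore present the proof via this substitution plus Cauchy–Schwarz rather than via eigenvalues, handling \eqref{eq:omega_j} and \eqref{eq:omegaPRIME} in parallel with the only difference being whether one groups coordinates singly or by partition block.
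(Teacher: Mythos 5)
Your proposal is correct and follows essentially the same route as the paper: rewrite the objective and constraint in terms of the scalars $\bA_{j:}^{(l)} x^{(l)}$ (your $\xi_l$), apply Cauchy--Schwarz over the $\omega_j'$ nonzero blocks with the equal-values attainment case, and obtain \eqref{eq:omega_j} as the special case where each block is a single coordinate (the paper phrases this as ``set $c=d$''). Your explicit attention to the semidefiniteness degeneracy on zero coordinates is a sound refinement of the same argument, not a different one.
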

\begin{proof}
For $l \in \{1,\dots,c\}$ and $y \in \R^d$, denote $y^{(l)}:=(y^i)_{i \in {\cal P}_l}$. That is, $y^{(l)}$ is the subvector of $y$ composed of coordinates  belonging to partition ${\cal P}_l$.
Then we have:
$$ 
x^\top \bM_j x=(\sum_{l=1}^c \bA_{j:}^{(l)} x^{(l)})^2,\enspace x^\top B^{\bM_j}x=\sum_{l=1}^c ( \bA_{j:}^{(l)} x^{(l)})^2.
$$
Let $S' = \{l: \bA_{j:}^{(l)} \neq 0\}$, then $\omega_j'=|S'|$ and by Cauchy-Schwarz we have
\[\textstyle{ (\sum_{l\in S'}\bA_{j:}^{(l)} x^{(l)})^2 \;\;\leq\;\; \omega_j' \sum_{l\in S'} (\bA_{j:}^{(l)} x^{(l)})^2. }\]
Equality is reached when  $\bA_{j:}^{(l)} x^{(l)}=\alpha$ for some constant $\alpha$ for all $l\in S'$ (this is feasible since the subsets $\{\pP_1,\dots,\pP_c\}$ are disjoint).  Hence, we proved~\eqref{eq:omegaPRIME}.
The characterization~\eqref{eq:omega_j} follows from \eqref{eq:omegaPRIME} by setting $c=d$.
\end{proof}

We shall also need the following lemma.
\begin{lemma}[\cite{richtarik2013distributed}]\label{l-xSGxS} Fix arbitrary $\bG \in \R^{d\times d}$ and $x\in \R^d$ and let $s_1=\max\{1,s-1\}$. Then $\Exp[(x^{\hat{S}})^\top \bG x^{\hat{S}}]$ is equal to
\begin{equation}\label{eq:ESO-identity} \tfrac{\tau}{s} \left[ \alpha_1 x^\top D^\bG x + \alpha_2 x^{\top} \bG x + \alpha_3 x^\top (\bG - B^{\bG})x   \right],\end{equation}
where $\alpha_1 = 1-\tfrac{\tau-1}{s_1}$, $\alpha_2 = \tfrac{\tau-1}{s_1}$, $\alpha_3=\tfrac{\tau}{s}-\tfrac{\tau-1}{s_1}$.
\end{lemma}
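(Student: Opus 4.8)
The plan is to expand the quadratic form entrywise and reduce the whole computation to evaluating the pairwise inclusion probabilities of the distributed sampling $\hat S$. Since $x^{\hat S}=\sum_{i\in\hat S}x^i e_i$, we have $(x^{\hat S})^\top\bG x^{\hat S}=\sum_{i,j}\bG_{ij}x^ix^j\,\mathbf{1}_{(i\in\hat S)}\mathbf{1}_{(j\in\hat S)}$, so by linearity of expectation $\Exp[(x^{\hat S})^\top\bG x^{\hat S}]=\sum_{i,j}\bG_{ij}x^ix^j\,p_{ij}$, where $p_{ij}:=\Prob(i\in\hat S,\ j\in\hat S)$. Everything now hinges on computing $p_{ij}$ in the three cases determined by the block structure $\{\pP_1,\dots,\pP_c\}$: $i=j$; $i\neq j$ in the same block; and $i\neq j$ in distinct blocks.

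First I would evaluate these three probabilities, using that $\hat S=\cup_l\hat S_l$, where each $\hat S_l$ is a uniformly random $\tau$-subset of the $s$-element set $\pP_l$ and the sets $\hat S_l$ are mutually independent. If $i=j$, then $p_{ii}=\tau/s$. If $i\neq j$ lie in the same block, then $p_{ij}=\binom{s-2}{\tau-2}/\binom{s}{\tau}=\tfrac{\tau(\tau-1)}{s(s-1)}$; this case can only occur when $s\geq 2$, which is exactly why the statement uses $s_1=\max\{1,s-1\}$. If $i\neq j$ lie in distinct blocks, then independence gives $p_{ij}=(\tau/s)^2$. Substituting these values and regrouping the double sum, and using the identities $x^\top D^\bG x=\sum_i\bG_{ii}(x^i)^2$, $x^\top(B^\bG-D^\bG)x=\sum_{i\neq j\ \text{in the same block}}\bG_{ij}x^ix^j$, and $x^\top(\bG-B^\bG)x=\sum_{i\neq j\ \text{in different blocks}}\bG_{ij}x^ix^j$ (all immediate from the definitions of $D^\bG$ and $B^\bG$), I obtain
\[
\Exp[(x^{\hat S})^\top\bG x^{\hat S}]=\tfrac{\tau}{s}\,x^\top D^\bG x+\tfrac{\tau(\tau-1)}{s\,s_1}\,x^\top(B^\bG-D^\bG)x+\tfrac{\tau^2}{s^2}\,x^\top(\bG-B^\bG)x .
\]

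The last step is a one-line change of basis from the triple $\{D^\bG,\ B^\bG-D^\bG,\ \bG-B^\bG\}$ to the triple $\{D^\bG,\ \bG,\ \bG-B^\bG\}$ appearing in the statement: since $\bG=D^\bG+(B^\bG-D^\bG)+(\bG-B^\bG)$, the claimed expression \eqref{eq:ESO-identity} equals $\tfrac{\tau}{s}\big[(\alpha_1+\alpha_2)x^\top D^\bG x+\alpha_2\,x^\top(B^\bG-D^\bG)x+(\alpha_2+\alpha_3)x^\top(\bG-B^\bG)x\big]$, and matching coefficients with the display above forces $\alpha_1+\alpha_2=1$, $\alpha_2=\tfrac{\tau-1}{s_1}$, and $\alpha_2+\alpha_3=\tfrac{\tau}{s}$, which is precisely the asserted choice of $\alpha_1,\alpha_2,\alpha_3$. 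I do not anticipate a genuine obstacle here: the argument is pure bookkeeping, and the only point requiring care is the degenerate case $s=1$ (which forces $\tau=1$), where the middle term is vacuous and one must read $\tfrac{\tau-1}{s_1}$ as $0$ instead of dividing by $s-1=0$; with $s_1=1$ the displayed formula and the values of $\alpha_1,\alpha_2,\alpha_3$ remain correct.
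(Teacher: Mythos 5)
Your proof is correct, and it is the standard argument: the paper itself states this lemma without proof, importing it from the Hydra paper \cite{richtarik2013distributed}, where the derivation likewise reduces to the three pairwise inclusion probabilities $\tau/s$, $\tau(\tau-1)/(s(s-1))$ and $(\tau/s)^2$ of the distributed sampling and then regroups via $\bG = D^\bG + (B^\bG - D^\bG) + (\bG - B^\bG)$. Your handling of the degenerate case $s=1$ (forcing $\tau=1$ and making the same-block off-diagonal term vacuous, so that $s_1$ merely avoids division by zero) is exactly the point the $s_1=\max\{1,s-1\}$ convention is there to cover.
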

Below is the main result of this section.
\begin{thm}
\label{prop:eso}
For a convex differentiable function $f$ satisfying~\eqref{eq:kvadraticUpperbound} and a distributed sampling $\hat S$ described in Section~\ref{sec-algo}, the ESO assumption~\ref{ass-eso} is satisfied for
\begin{align}\label{a-bDiibetaj}
\bD^1_{ii} \equiv \bD_{ii}=  \sum_{j=1}^n \alpha^*_j\bA_{ji}^2, \quad i = 1,\dots,d \enspace,
\end{align}
where
\begin{equation}\label{eq-alphaj}
\begin{split}
&\alpha^*_{j}:=\alpha^*_{1,j}+\alpha^*_{2,j},\enspace
\alpha^*_{1,j} = 1+\tfrac{(\tau-1)(\omega_j-1)}{s_1},\\ & \alpha^*_{2,j} =(\tfrac{\tau}{s} - \tfrac{\tau-1}{s_1}) \tfrac{\omega_j'-1}{\omega_j'}\omega_j.\\
\end{split}
\end{equation}
\end{thm}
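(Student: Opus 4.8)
The plan is to specialize the quadratic upper bound~\eqref{eq:kvadraticUpperbound} to the random increment $h^{\hat S}$ and take expectations. Writing $\bA^\top\bA = \sum_{j=1}^n \bM_j$, this gives
\begin{equation*}
\EX{f(x+h^{\hat S})} \leq f(x) + \EX{(f'(x))^\top h^{\hat S}} + \tfrac12\sum_{j=1}^n \EX{(h^{\hat S})^\top \bM_j h^{\hat S}}.
\end{equation*}
Because the distributed sampling places each coordinate into $\hat S$ with probability $\tau/s = \EX{|\hat S|}/d$, we have $\EX{h^{\hat S}} = \tfrac{\tau}{s}h$, so the linear term is already $\tfrac{\EX{|\hat S|}}{d}(f'(x))^\top h$; it therefore suffices to prove $\sum_{j=1}^n \EX{(h^{\hat S})^\top\bM_j h^{\hat S}} \leq \tfrac{\tau}{s}\, h^\top\bD h$.

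For each $j$, I would apply Lemma~\ref{l-xSGxS} with $\bG=\bM_j$ and then replace $h^\top\bM_j h$ and $h^\top(\bM_j-B^{\bM_j})h$ by multiples of $h^\top D^{\bM_j}h$. First I would record that $\alpha_1,\alpha_2,\alpha_3\geq 0$; the only nontrivial sign is $\alpha_3 = \tfrac{\tau}{s}-\tfrac{\tau-1}{s_1}\geq 0$, which is where the hypothesis $\tau\leq s$ enters. Since $\bM_j\succeq 0$ and $\alpha_3\geq 0$, I would bound
\begin{equation*}
\alpha_3\, h^\top(\bM_j-B^{\bM_j})h = \alpha_3\big(h^\top\bM_j h - h^\top B^{\bM_j}h\big) \leq \alpha_3\,\tfrac{\omega_j'-1}{\omega_j'}\,h^\top\bM_j h,
\end{equation*}
using the lower bound $h^\top B^{\bM_j}h \geq \tfrac{1}{\omega_j'}h^\top\bM_j h$, which is the homogeneous form of~\eqref{eq:omegaPRIME} (the degenerate case $h^\top B^{\bM_j}h=0$ forces $h^\top\bM_j h=0$ and is handled separately); similarly $h^\top\bM_j h \leq \omega_j\, h^\top D^{\bM_j}h$ from~\eqref{eq:omega_j}. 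Collecting terms (and noting $\alpha_2+\alpha_3\tfrac{\omega_j'-1}{\omega_j'}\geq 0$),
\begin{equation*}
\EX{(h^{\hat S})^\top\bM_j h^{\hat S}} \leq \tfrac{\tau}{s}\Big[\alpha_1 + \big(\alpha_2+\alpha_3\tfrac{\omega_j'-1}{\omega_j'}\big)\omega_j\Big]\, h^\top D^{\bM_j}h,
\end{equation*}
and a one-line check identifies the bracket with $\alpha^*_j$: indeed $\alpha_1+\alpha_2\omega_j = 1+\tfrac{(\tau-1)(\omega_j-1)}{s_1}=\alpha^*_{1,j}$ and $\alpha_3\tfrac{\omega_j'-1}{\omega_j'}\omega_j=\alpha^*_{2,j}$.

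Finally, summing over $j$ and using $h^\top D^{\bM_j}h = \sum_i \bA_{ji}^2 h_i^2$ together with the definition~\eqref{a-bDiibetaj},
\begin{equation*}
\sum_{j=1}^n \EX{(h^{\hat S})^\top\bM_j h^{\hat S}} \leq \tfrac{\tau}{s}\sum_{i=1}^d\Big(\sum_{j=1}^n\alpha^*_j\bA_{ji}^2\Big)h_i^2 = \tfrac{\tau}{s}\,h^\top\bD h,
\end{equation*}
which is the claim. The main obstacle is the middle step: $\bM_j-B^{\bM_j}$ is in general \emph{not} positive semidefinite, so the term $\alpha_3 h^\top(\bM_j-B^{\bM_j})h$ cannot simply be dropped and must instead be routed through $h^\top\bM_j h$ with the sign of $\alpha_3$ under control --- this is precisely what forces the form of Lemma~\ref{l-dso} and the simultaneous use of both characterizations~\eqref{eq:omega_j}--\eqref{eq:omegaPRIME}.
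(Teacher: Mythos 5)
Your proposal is correct and follows essentially the same route as the paper: specialize \eqref{eq:kvadraticUpperbound} to $h^{\hat S}$, handle the linear term via uniformity of the sampling, apply Lemma~\ref{l-xSGxS} with $\bG=\bM_j$, use both characterizations of Lemma~\ref{l-dso} (together with $\alpha_3\geq 0$) to bound everything by $h^\top D^{\bM_j}h$, and sum over $j$. Your explicit justification of the step $h^\top B^{\bM_j}h \geq \tfrac{1}{\omega_j'}h^\top \bM_j h$ and of the sign of $\alpha_3$ is a slightly more careful rendering of what the paper does implicitly.
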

\begin{proof}
Fix any $h\in \R^ d$. It is easy to see that
$
\E[(f'(x))^{\top}h^{\hat{S}}]=\frac{\E[|\hat S|]}{d}(f'(x))^{\top}h)
$; this follows by noting that $\hat S$ is a uniform sampling (we refer the reader to~\cite{richtarik2012parallel} and~\cite{fercoq2013smooth} for more identities of this type). In view of  \eqref{eq:kvadraticUpperbound},  we only need to show that
\begin{align}\label{a-EhAh0}
\E\left[(h^{\hat{S}})^\top \bA^\top \bA h^{\hat{S}}\right]\leq \tfrac{\E[|\hat S|]}{d} h^\top \bD h\enspace.
\end{align}
By applying Lemma~\ref{l-xSGxS} with $\bG=\bM_j$ and using Lemma~\ref{l-dso}, we can upper-bound $\E[(h^{\hat{S}})^\top \bM_j h^{\hat{S}}]$ by
\begin{align}
&\leq
\tfrac{\tau}{s}  [ \alpha_1 h^\top D^{\bM_j} h + \alpha_2 \omega_j h^{\top} D^{\bM_j} h + \alpha_3 (1-\tfrac{1}{\omega_j'})h^\top \bM_j h   ]
\notag\\
&\leq \tfrac{\tau}{s}  [ \alpha_1  + \alpha_2 \omega_j + \alpha_3 (1-\tfrac{1}{\omega_j'}) \omega_j]h^\top D^{\bM_j} h \notag\\
&= \tfrac{\E[|\hat S|]}{d} \alpha_j^* h^\top D^{\bM_j}h.\label{a-Mj}
\end{align}
Since $
\E [(h^{\hat{S}})^\top \bA^\top \bA h^{\hat{S}} ]
= \sum_{j=1}^n \E [(h^{\hat{S}})^{\top} \bM_j h^{\hat{S}}]
$,
 \eqref{a-EhAh0} can be obtained by summing up~\eqref{a-Mj} over
$j$ from 1 to $n$.
\end{proof}

\section{Existing Stepsizes and New Bounds}

We now desribe stepsizes previously suggested in the literature.  For simplicity, let $\bM :=\bA^ \top \bA$. Define:
\begin{equation}\label{eq-sigma}
\begin{split}
 \sigma &: = \max \{ x^\top \bM  x : x \in \mathbb{R}^d ; x^\top D^{\bM} x \leq 1 \},\\
\sigma' & :=  \max \{ x^\top \bM x : x \in \mathbb{R}^d ; x^\top B^{\bM}x \leq 1 \} .
\end{split}
\end{equation}
The quantities $\sigma$ and $\sigma'$ are identical to those defined in~\cite{richtarik2013distributed} (although the definitions are slightly different). The following stepsize parameters have been introduced in~\cite{richtarik2013distributed}:
\begin{lemma}[\cite{richtarik2013distributed}]\label{l-oldeso}
The ESO assumption~\ref{ass-eso} is satisfied for
\begin{align} \label{a-Diibeta}
 \bD^2_{ii} \equiv \bD_{ii}=\beta^*\sum_{j=1}^n \bA_{ji}^ 2,\enspace i=1,\dots,d\enspace,
\end{align}
where  $\beta^*:=\beta^*_{1}+\beta^*_{2}$ and
\begin{equation}
\label{eq:sjsus7}
\begin{split}
&\beta^*_{1} = 1+\tfrac{(\tau-1)(\sigma-1)}{s_1},\enspace \beta^*_{2} =\left(\tfrac{\tau}{s} - \tfrac{\tau-1}{s_1}\right) \tfrac{\sigma'-1}{\sigma'}\sigma.
\end{split}
\end{equation}
\end{lemma}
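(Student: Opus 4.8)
The plan is to mirror the proof of Theorem~\ref{prop:eso}, except that I would work with the full Gram matrix $\bM=\bA^\top\bA$ in one stroke instead of row by row. The first observation is that the proposed stepsizes can be written compactly as $\bD^2=\beta^* D^{\bM}$, because $\sum_{j=1}^n \bA_{ji}^2=\bM_{ii}=D^{\bM}_{ii}$. Exactly as before, the linear part of the ESO inequality~\eqref{eq:ESO} is taken care of by the uniform-sampling identity $\E[(f'(x))^\top h^{\hat S}]=\tfrac{\E[|\hat S|]}{d}(f'(x))^\top h$, and $\E[|\hat S|]/d=\tau/s$ for the distributed sampling; hence by~\eqref{eq:kvadraticUpperbound} it suffices to establish the quadratic bound
\[
\E\big[(h^{\hat S})^\top \bM\, h^{\hat S}\big]\;\le\;\tfrac{\tau}{s}\,\beta^*\,h^\top D^{\bM} h\qquad\text{for all }h\in\R^d .
\]

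Next I would apply Lemma~\ref{l-xSGxS} with $\bG=\bM$, which yields
\[
\E\big[(h^{\hat S})^\top \bM\, h^{\hat S}\big]=\tfrac{\tau}{s}\big[\alpha_1 h^\top D^{\bM} h+\alpha_2 h^\top \bM h+\alpha_3\,h^\top(\bM-B^{\bM})h\big],
\]
and then bound the second and third terms by means of the definitions in~\eqref{eq-sigma}. By homogeneity, $\sigma$ and $\sigma'$ give $h^\top \bM h\le\sigma\,h^\top D^{\bM} h$ and $h^\top B^{\bM} h\ge\tfrac{1}{\sigma'}h^\top \bM h$; the latter is legitimate because $B^{\bM}=\sum_{j}B^{\bM_j}$ is positive semidefinite (each diagonal block is a rank-one Gram matrix) and its kernel is contained in that of $\bM$. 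Consequently $h^\top(\bM-B^{\bM})h\le(1-\tfrac{1}{\sigma'})h^\top \bM h\le(1-\tfrac{1}{\sigma'})\sigma\,h^\top D^{\bM} h$. Plugging these into the identity and collecting coefficients, one checks $\alpha_1+\alpha_2\sigma=\beta_1^*$ and $\alpha_3(1-\tfrac{1}{\sigma'})\sigma=\beta_2^*$ straight from~\eqref{eq:sjsus7}, so the sum of the three terms is $\tfrac{\tau}{s}(\beta_1^*+\beta_2^*)h^\top D^{\bM}h$, which is precisely the desired bound with $\beta^*=\beta_1^*+\beta_2^*$.

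The only genuinely delicate point — and the step I would handle explicitly — is the sign of $\alpha_3=\tfrac{\tau}{s}-\tfrac{\tau-1}{s_1}$: replacing $h^\top(\bM-B^{\bM})h$ by $(1-\tfrac{1}{\sigma'})h^\top\bM h$ preserves the inequality in the right direction only when $\alpha_3\ge 0$, and $\bM-B^{\bM}$ is in general \emph{not} positive semidefinite, so this term cannot simply be discarded. One verifies $\alpha_3\ge 0$ for every admissible $\tau$: for $\tau=1$ it equals $1/s$, while for $2\le\tau\le s$ (so $s_1=s-1$) it equals $\tfrac{s-\tau}{s(s-1)}\ge 0$. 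Together with $1-\tfrac{1}{\sigma'}\ge 0$ (since $\sigma'\ge 1$, as seen by testing $x=e_i/\sqrt{\bM_{ii}}$) and $h^\top D^{\bM}h\ge 0$, all the estimates then combine in the correct direction. No new idea beyond Theorem~\ref{prop:eso} is needed; the whole difference is that the row-wise quantities $\omega_j,\omega_j'$ are replaced by their global counterparts $\sigma,\sigma'$, and the sum over $j$ that appeared there is absorbed into a single application of Lemma~\ref{l-xSGxS}.
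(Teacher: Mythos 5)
Your proof is correct, and it is essentially the paper's approach: the paper only cites this lemma from \cite{richtarik2013distributed} without reproving it, and your argument is exactly the global-matrix analogue of the paper's own proof of Theorem~\ref{prop:eso} (one application of Lemma~\ref{l-xSGxS} with $\bG=\bM$ and the extremal characterizations \eqref{eq-sigma} in place of the row-wise $\omega_j,\omega_j'$), which is also how the cited reference proves it. The points you single out as delicate are handled correctly: $\alpha_3\ge 0$ for all $1\le\tau\le s$, $\sigma'\ge 1$, and $\ker B^{\bM}\subseteq\ker\bM$ (since $h^\top B^{\bM}h=\sum_l\|\bA^{(l)}h^{(l)}\|^2=0$ forces $\bA h=0$), so the bound $h^\top(\bM-B^{\bM})h\le(1-\tfrac{1}{\sigma'})\sigma\,h^\top D^{\bM}h$ is valid.
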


\subsection{Known bounds on existing stepsizes}\label{sec-knownb}

In general, the computation of $\sigma$ and $\sigma'$ can be done using the Power iteration method~\cite{NumericalLA}.
However, the number of operations required in each iteration of the Power method is at least twice as the number of nonzero elements in $\bA$.
 Hence the only computation of the parameters $\sigma$ and $\sigma'$ would already require quite a few number of passes through the data.
Instead, if one could provide some easily computable upper bound for $\sigma$ and $\sigma'$,
where by 'easily computable' we mean computable by only one pass through the data, then we can run Algorithm~\ref{algo:accdistr}
immediately without spending too much time on the computation of $\sigma$ and $\sigma'$.
 Note that the ESO assumption will still hold  when $\sigma$ and $\sigma'$ in~\eqref{a-Diibeta} are replaced by some upper bounds.

In~\cite{richtarik2013distributed}, the authors established the following bound
\begin{align}\label{a-beta2beta1}
\beta^*\leq 2\beta_1^*=2(1+\tfrac{(\tau-1)(\sigma-1)}{s_1})\enspace,
\end{align}
which is independent of the partition $\{\pP_l\}_{l=1,\dots,c}$. This bound holds for $\tau\geq 2$. Further, they showed that
\begin{align}\label{a-upperboundsigma}
 \sigma\leq \max_{j} \omega_j \enspace.
\end{align}
Then in view of~\eqref{a-beta2beta1},~\eqref{a-upperboundsigma} and Lemma~\ref{l-oldeso}, the ESO assumption~\ref{ass-eso} is
also satisfied for the following easily computable parameters:
\begin{align} \label{a-Diibetaupper}
\bD^3_{ii} \equiv \bD_{ii}=2\left(1+\tfrac{\tau-1}{s_1}(\max_j \omega_j-1)\right)\sum_{j=1}^n \bA_{ji}^ 2.
\end{align}

\subsection{Improved bounds on existing stepsizes}

In what follows, we show that both~\eqref{a-beta2beta1} and~\eqref{a-upperboundsigma} can be improved so that
smaller parameters $\{\bD_{ii}\}_{i=1}^d$ are allowed in the algorithm.

\begin{lemma}\label{l-witheta}
Suppose $\tau\geq 2$ (note that then $s\geq \tau\geq 2$).  For all $1\leq \eta \leq s$  the following holds
\begin{equation}\label{eq-teta}
\textstyle \left(\frac{\tau}{s}-\frac{\tau-1}{s-1}\right)\eta \leq
\frac{1}{\tau-1}\left(1+\frac{(\tau-1)(\eta-1)}{s-1}\right)
\end{equation}
\end{lemma}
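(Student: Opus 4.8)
The plan is to clear denominators and reduce~\eqref{eq-teta} to an elementary inequality that follows at once from the two facts $\eta\ge 1$ and $\tau\le s$.

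First I would rewrite both sides in lowest terms. A one-line computation gives $\frac{\tau}{s}-\frac{\tau-1}{s-1}=\frac{s-\tau}{s(s-1)}$ and $\frac{1}{\tau-1}\big(1+\frac{(\tau-1)(\eta-1)}{s-1}\big)=\frac{1}{\tau-1}+\frac{\eta-1}{s-1}$. Under the hypothesis $\tau\ge 2$ we have $s\ge\tau\ge 2$, so $s(s-1)>0$ and $\tau-1>0$; hence multiplying~\eqref{eq-teta} through by $s(s-1)$ is legitimate and the inequality becomes $(s-\tau)\eta\le \frac{s(s-1)}{\tau-1}+s(\eta-1)$. Transposing the term $s(\eta-1)$, the left-hand side telescopes to $s-\tau\eta$, so everything reduces to showing $s-\tau\eta\le \frac{s(s-1)}{\tau-1}$.

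The remaining inequality I would settle by two trivial bounds. Since $\eta\ge 1$, we have $s-\tau\eta\le s-\tau\le s$; and since $\tau\le s$, i.e.\ $\tau-1\le s-1$, we get $\frac{s(s-1)}{\tau-1}\ge \frac{s(s-1)}{s-1}=s$. Concatenating these, $s-\tau\eta\le s\le \frac{s(s-1)}{\tau-1}$, which is exactly what was needed.

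There is no serious obstacle here; the only point requiring care is keeping track of signs when clearing the denominators $s(s-1)$ and $\tau-1$, which is precisely where the assumption $\tau\ge 2$ (hence $s\ge 2$) enters, and that same assumption is what makes the final bound $\frac{s(s-1)}{\tau-1}\ge s$ hold. The mildly non-obvious step is noticing that after clearing denominators the left-hand side collapses to the simple expression $s-\tau\eta$.
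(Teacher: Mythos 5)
Your proof is correct: the simplification $\frac{\tau}{s}-\frac{\tau-1}{s-1}=\frac{s-\tau}{s(s-1)}$ is right, clearing the (positive) denominators is legitimate under $s\geq\tau\geq 2$, the collapse of $(s-\tau)\eta-s(\eta-1)$ to $s-\tau\eta$ checks out, and the chain $s-\tau\eta\leq s\leq\frac{s(s-1)}{\tau-1}$ closes the argument. The paper takes a different (and terser) route: it observes that both sides of \eqref{eq-teta} are affine functions of $\eta$, so it suffices to verify the inequality at the two endpoints $\eta=1$ and $\eta=s$, and leaves that verification to the reader. Your approach trades the linearity observation for a direct algebraic reduction, and it buys something the paper's does not: your argument uses only $\eta\geq 1$ (the upper bound $\eta\leq s$ never enters), so you have in fact proved the slightly stronger statement that \eqref{eq-teta} holds for all $\eta\geq 1$. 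The paper's endpoint method, by contrast, genuinely relies on $\eta\leq s$ as stated, but generalizes more readily to situations where one only knows an inequality holds at the two ends of an interval. Either proof is complete and acceptable here.
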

\begin{proof}
Both sides of the inequality are linear functions of $\eta$. It therefore suffices to verify that the inequality holds for $\eta=1$ and $\eta=s$, which can be done.
\end{proof}

The following result is an improvement on \eqref{a-beta2beta1}, which was shown in \cite[Lemma 2]{richtarik2013distributed}.

\begin{lemma}\label{l-betabeta1}
 If $\tau\geq 2$, then $\beta^* \leq (1+\frac{1}{\tau-1})\beta_{1}^*$.
\end{lemma}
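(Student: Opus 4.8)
The plan is to reduce the claim to the single inequality $\beta^*_2 \le \tfrac{1}{\tau-1}\beta^*_1$, which is equivalent to it since $\beta^* = \beta^*_1 + \beta^*_2$. Because $\tau \ge 2$ forces $s \ge \tau \ge 2$, we have $s_1 = s-1$ throughout, so $\beta^*_2 = \bigl(\tfrac{\tau}{s}-\tfrac{\tau-1}{s-1}\bigr)\tfrac{\sigma'-1}{\sigma'}\sigma$ and $\beta^*_1 = 1+\tfrac{(\tau-1)(\sigma-1)}{s-1}$.

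The first step is to discard the factor $\tfrac{\sigma'-1}{\sigma'}$: one computes $\tfrac{\tau}{s}-\tfrac{\tau-1}{s-1} = \tfrac{s-\tau}{s(s-1)} \ge 0$ (using $s\ge\tau$), and $0 \le \tfrac{\sigma'-1}{\sigma'} \le 1$ because $\sigma' \ge 1$, whence $\beta^*_2 \le \bigl(\tfrac{\tau}{s}-\tfrac{\tau-1}{s-1}\bigr)\sigma$. It therefore suffices to show
\[
\Bigl(\tfrac{\tau}{s}-\tfrac{\tau-1}{s-1}\Bigr)\sigma \;\le\; \tfrac{1}{\tau-1}\Bigl(1+\tfrac{(\tau-1)(\sigma-1)}{s-1}\Bigr) = \tfrac{1}{\tau-1}\beta^*_1 .
\]
This is exactly the inequality of Lemma~\ref{l-witheta} with $\eta := \sigma$ (divided by $\tau-1$), so when $\sigma \le s$ the claim follows at once. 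The point to be careful about is that $\sigma$ need not satisfy $\sigma \le s$ — it can be as large as the largest row-support of $\bA$ — so Lemma~\ref{l-witheta} does not close the argument on its own. To cover this, I note that the displayed inequality, after multiplying through by $(s-1) > 0$ and rearranging, is equivalent to $1 - \tfrac{\tau\sigma}{s} \le \tfrac{s-1}{\tau-1}$, and this holds unconditionally: its left-hand side is strictly below $1$ (as $\tau,\sigma,s>0$), while its right-hand side is at least $1$ (as $s-1 \ge \tau-1 > 0$). So Lemma~\ref{l-witheta} is a convenient shortcut in the regime $\sigma \le s$ but can also be bypassed entirely via this one-line rearrangement.

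The only genuine obstacle is the initial modeling choice: recognizing that $\tfrac{\sigma'-1}{\sigma'}$ should simply be bounded by $1$ rather than analysed through the relation between $\sigma$ and $\sigma'$, and not being derailed by the fact that $\sigma$ may exceed $s$. Once past that, everything is a one-variable rearrangement controlled by $s \ge \tau \ge 2$ and $\sigma \ge 1$, with no sign ambiguities because $\tfrac{\tau}{s}-\tfrac{\tau-1}{s-1} \ge 0$.
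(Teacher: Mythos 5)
Your proof is correct and follows the same route as the paper: drop the factor $\tfrac{\sigma'-1}{\sigma'}$ by bounding it by $1$ (after checking the prefactor $\tfrac{\tau}{s}-\tfrac{\tau-1}{s-1}=\tfrac{s-\tau}{s(s-1)}\ge 0$), then compare $\bigl(\tfrac{\tau}{s}-\tfrac{\tau-1}{s-1}\bigr)\sigma$ with $\tfrac{1}{\tau-1}\beta_1^*$ via the linear inequality of Lemma~\ref{l-witheta} with $\eta=\sigma$.

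The one place where you go beyond the paper is worth keeping: you observe that Lemma~\ref{l-witheta} is stated only for $1\le\eta\le s$, whereas $\sigma$ can exceed $s$ (it is bounded by $\max_j\omega_j$, which can be as large as $d=cs$, e.g.\ for a dense row), so the paper's one-line invocation of that lemma is, strictly speaking, outside its stated hypotheses. Your rearrangement to $1-\tfrac{\tau\sigma}{s}\le\tfrac{s-1}{\tau-1}$, whose left side is below $1$ and whose right side is at least $1$ because $s\ge\tau$, shows the needed inequality holds for every $\sigma\ge 1$ with no upper restriction; equivalently, since both sides of \eqref{eq-teta} are affine in $\eta$ and the right side has the larger slope ($\tfrac{1}{s-1}$ versus $\tfrac{s-\tau}{s(s-1)}$), validity at $\eta=1$ already gives validity for all $\eta\ge 1$. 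So your argument is not a different proof, but it closes a small gap in the paper's and makes the lemma's conclusion unconditional in $\sigma$.
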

\begin{proof}

We only need to apply  Lemma~\ref{l-witheta} to \eqref{eq:sjsus7} with $\eta = \sigma$, and additionally use the bound $\tfrac{\sigma'-1}{\sigma'}\leq 1$. This gives $\beta_2^*\leq \beta_1^*/(\tau-1)$, from which the result follows.
\end{proof}

The above lemma leads to an important insight:  {\bf as long as $\tau\geq 2$, the effect of  partitioning the data (across the nodes) on the iteration complexity of \DACCORD  is negligible, and vanishes as $\tau$ increases.}

Indeed, consider
the complexity bound provided by stepsizes $\bD^2$ given in \eqref{a-Diibeta}, used within Theorem \ref{thm:main} and note that the effect of the partitioning on the complexity is through $\sigma'$ only (which is not easily computable!), which appears in $\beta_2^*$ only. However, by the above lemma,
$\beta_1^* \leq \beta^* \leq (1+\tfrac{1}{\tau-1})\beta_1^*$ for any partitioning (to $c$ sets each containing $s$ coordinates). Hence, i) each partitioning is near-optimal and ii) one does not even need to know $\sigma'$ to run the method as it is enough to use stepsizes $\bD^2$ with $\beta^*$ replaced by the upper bound provided by Lemma \ref{l-betabeta1}.

Note that the same reasoning can be applied to the Hydra method \cite{richtarik2013distributed} since the same stepsizes can be used there. Also note that for $\tau=1$ the effect of partitioning can have a dramatic effect on the complexity.

\begin{rem}
Applying the same reasoning to the new stepsizes $\alpha_j^*$ we can show that $\alpha_j^ * \leq (1+1/(\tau-1))\alpha_{1,j}^ *$ for all $j\in\{1,\dots,n\}$.
This is not as useful as Lemma~\ref{l-betabeta1} since $\alpha_{2,j}^*$ does not need to be approximated ($\omega_j'$ is easily
computable).
\end{rem}

We next give a tighter upper bound on $\sigma$ than~\eqref{a-upperboundsigma}.
\begin{lemma}\label{l-newupperboundonsigma} If we let
\begin{align}\label{a-vi}
v_i:=  \frac{\sum_{j=1}^ n \omega_j \bA_{ji}^ 2 }{\sum_{j=1}^n \bA_{ji}^ 2},\enspace i=1,\dots,d,
\end{align}
then
 $
\sigma \leq \tilde \sigma := \max_{i} v_i.
$
\end{lemma}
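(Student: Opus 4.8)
The plan is to reduce the quadratic form $x^\top \bM x$ to a sum of the ``per-row'' quadratic forms $x^\top \bM_j x$ and bound each of them using the characterization of $\omega_j$ from Lemma~\ref{l-dso}. First I would recall that $\bM = \bA^\top \bA = \sum_{j=1}^n \bM_j$ with $\bM_j = \bA_{j:}^\top \bA_{j:}$, so that for any $x \in \R^d$ we have $x^\top \bM x = \sum_{j=1}^n x^\top \bM_j x$. I would also note the entrywise identities $\bM_{ii} = \sum_{j=1}^n \bA_{ji}^2$ and $(D^{\bM_j})_{ii} = \bA_{ji}^2$, so that $x^\top D^{\bM_j} x = \sum_{i=1}^d \bA_{ji}^2 (x^i)^2$ and $x^\top D^{\bM} x = \sum_{i=1}^d \bM_{ii}(x^i)^2$.

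Next, from \eqref{eq:omega_j} in Lemma~\ref{l-dso} we have, for every $j$ and every $x$, the homogeneous bound $x^\top \bM_j x \leq \omega_j\, x^\top D^{\bM_j} x$ (the maximum is scale-invariant, so the constraint can be dropped at the cost of the factor $\omega_j$). Summing this over $j$ gives
\begin{align*}
x^\top \bM x \;=\; \sum_{j=1}^n x^\top \bM_j x \;\leq\; \sum_{j=1}^n \omega_j \sum_{i=1}^d \bA_{ji}^2 (x^i)^2 \;=\; \sum_{i=1}^d (x^i)^2 \sum_{j=1}^n \omega_j \bA_{ji}^2 \;=\; \sum_{i=1}^d v_i\, \bM_{ii}\, (x^i)^2,
\end{align*}
where in the last step I substituted the definition \eqref{a-vi} of $v_i$, i.e. $\sum_{j=1}^n \omega_j \bA_{ji}^2 = v_i \sum_{j=1}^n \bA_{ji}^2 = v_i \bM_{ii}$.

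Finally, bounding $v_i \leq \tilde\sigma := \max_i v_i$ in each term yields $x^\top \bM x \leq \tilde\sigma \sum_{i=1}^d \bM_{ii}(x^i)^2 = \tilde\sigma\, x^\top D^{\bM} x$. Taking the maximum over all $x$ with $x^\top D^{\bM} x \leq 1$ gives $\sigma \leq \tilde\sigma$, as claimed. I do not anticipate a genuine obstacle here; the only point requiring a little care is the passage from the constrained maximum defining $\omega_j$ to the unconstrained inequality $x^\top \bM_j x \leq \omega_j x^\top D^{\bM_j} x$ (valid by homogeneity, and also covering the degenerate case $x^\top D^{\bM_j}x = 0$, which forces $x^\top \bM_j x = 0$ since $\bM_j \succeq 0$ has support contained in that of $D^{\bM_j}$). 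Everything else is bookkeeping with the identities $\bM = \sum_j \bM_j$ and $\bM_{ii} = \sum_j \bA_{ji}^2$.
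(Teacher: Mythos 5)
Your proof is correct and follows essentially the same route as the paper: decompose $\bM=\sum_j \bM_j$, use the characterization of $\omega_j$ from Lemma~\ref{l-dso} to get $\bM_j\preceq \omega_j D^{\bM_j}$, sum over $j$ to obtain $\Diag(v)D^{\bM}\preceq \tilde\sigma D^{\bM}$, and conclude from the definition of $\sigma$. The paper states this as a one-line chain of matrix inequalities; you have merely unpacked it at the level of quadratic forms, including the (correct) handling of the degenerate case.
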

\begin{proof}
In view of Lemma~\ref{l-dso}, we know that
$$
\bM=\sum_{j=1}^ n\bM_j \preceq\sum_{j=1}^ n \omega_j D^ {\bM_j} = \Diag(v)D^{\bM} \preceq \max_i v_i D^ {\bM}.
$$
The rest follows from the definition of $\sigma$.
\end{proof}

By combining Lemma~\ref{l-oldeso}, Lemma~\ref{l-betabeta1} and Lemma~\ref{l-newupperboundonsigma},
we obtain the following smaller (compared to~\eqref{a-Diibetaupper})  admissible and easily computable stepsize parameters:
\begin{equation} \label{a-Diibetauppernew}
 \bD^4_{ii} \equiv \bD_{ii}=\tfrac{\tau}{\tau-1} \left( 1+\tfrac{(\tilde \sigma -1)(\tau-1)}{s-1}\right)\sum_{j=1}^n \bA_{ji}^ 2.
\end{equation}

\section{Comparison of old and new  stepsizes}

So far we have seen four different admissible stepsize parameters, some old and some new.
For ease of reference, let us call $\{\bD^1_{ii}\}_{i=1}^d$ the new parameters defined in~\eqref{a-bDiibetaj}, $\{\bD^2_{ii}\}_{i=1}^d$ the existing one
 given in~\cite{richtarik2013distributed} (see~\eqref{a-Diibeta}),
$\{\bD^3_{ii}\}_{i=1}^d$ the upper bound of $\{\bD^2_{ii}\}_{i=1}^d$ used in~\cite{richtarik2013distributed} (see~\eqref{a-Diibetaupper})
and $\{\bD^4_{ii}\}_{i=1}^d$
 the new upper bound of
$\{\bD^2_{ii}\}_{i=1}^d$ defined in~\eqref{a-Diibetauppernew}.

In the  next result we compare the four stepsizes. The lemma implies that $\bD^1$ and $\bD^2$ are uniformly smaller (i.e., better -- see Theorem \ref{thm:main}) than $\bD^4$ and $\bD^3$. However, $\bD^2$ involves quantities  which are hard to compute (e.g., $\sigma$). Moreover, $\bD^4$ is always smaller than $\bD^3$.
\begin{lemma}
Let $\tau\geq 2$.
 The following holds for all $i$:
\begin{align}
 &\bD_{ii}^ 1 \leq \bD_{ii}^4 \leq \bD_{ii}^3\\
& \bD_{ii}^2\leq \bD_{ii}^4\leq \bD_{ii}^3
\end{align}
\end{lemma}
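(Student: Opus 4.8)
The plan is to make $\bD^4$ the pivot: I will prove $\bD^1_{ii}\le\bD^4_{ii}$, $\bD^2_{ii}\le\bD^4_{ii}$, and $\bD^4_{ii}\le\bD^3_{ii}$ for every $i$, and then chain these to obtain the two displayed chains. Throughout, $\tau\ge 2$ forces $s\ge\tau\ge 2$, so $s_1=s-1$ and $\tfrac{\tau}{\tau-1}\le 2$; since all four stepsizes carry the common factor $\sum_{j=1}^n\bA_{ji}^2$, each comparison reduces to comparing the scalar prefactors.

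First I would dispatch $\bD^4_{ii}\le\bD^3_{ii}$. The quantity $v_i$ in \eqref{a-vi} is a weighted average of $\omega_1,\dots,\omega_n$ with weights $\bA_{ji}^2/\sum_j\bA_{ji}^2$, so $v_i\le\max_j\omega_j$ and hence $\tilde\sigma=\max_i v_i\le\max_j\omega_j$. Therefore $1+\tfrac{(\tilde\sigma-1)(\tau-1)}{s-1}\le 1+\tfrac{(\max_j\omega_j-1)(\tau-1)}{s-1}$, and multiplying through by $\tfrac{\tau}{\tau-1}\le 2$ shows that the prefactor of $\bD^4$ in \eqref{a-Diibetauppernew} is no larger than the prefactor of $\bD^3$ in \eqref{a-Diibetaupper}.

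Next, $\bD^2_{ii}\le\bD^4_{ii}$ is just a combination of two earlier results: Lemma~\ref{l-betabeta1} gives $\beta^*\le(1+\tfrac{1}{\tau-1})\beta_1^*=\tfrac{\tau}{\tau-1}\bigl(1+\tfrac{(\tau-1)(\sigma-1)}{s-1}\bigr)$, and Lemma~\ref{l-newupperboundonsigma} gives $\sigma\le\tilde\sigma$, so $\beta^*\le\tfrac{\tau}{\tau-1}\bigl(1+\tfrac{(\tau-1)(\tilde\sigma-1)}{s-1}\bigr)$, which is precisely the prefactor of $\bD^4$. The only step with real content is $\bD^1_{ii}\le\bD^4_{ii}$. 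By the remark following Lemma~\ref{l-betabeta1}, $\alpha_j^*\le(1+\tfrac{1}{\tau-1})\alpha_{1,j}^*=\tfrac{\tau}{\tau-1}\bigl(1+\tfrac{(\tau-1)(\omega_j-1)}{s-1}\bigr)$ for every $j$; summing against $\bA_{ji}^2$ and using the defining identity $\sum_j\omega_j\bA_{ji}^2=v_i\sum_j\bA_{ji}^2$ yields
$$\sum_{j=1}^n\alpha_j^*\bA_{ji}^2\;\le\;\tfrac{\tau}{\tau-1}\Bigl(1+\tfrac{(\tau-1)(v_i-1)}{s-1}\Bigr)\sum_{j=1}^n\bA_{ji}^2\;\le\;\tfrac{\tau}{\tau-1}\Bigl(1+\tfrac{(\tau-1)(\tilde\sigma-1)}{s-1}\Bigr)\sum_{j=1}^n\bA_{ji}^2=\bD^4_{ii},$$
the last step using $v_i\le\tilde\sigma$.

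The point I would expect to need the most care is the invocation of that remark, which rests on Lemma~\ref{l-witheta} applied at $\eta=\omega_j$: this is legitimate because inequality~\eqref{eq-teta} is linear in $\eta$ and its right-hand slope $\tfrac{1}{s-1}$ dominates its left-hand slope $\tfrac{s-\tau}{s(s-1)}$, so the inequality actually persists for all $\eta\ge 1$, not only $\eta\le s$ — which matters since $\omega_j$ can exceed $s$. Everything else is routine bookkeeping. Finally, chaining $\bD^1_{ii}\le\bD^4_{ii}\le\bD^3_{ii}$ and $\bD^2_{ii}\le\bD^4_{ii}\le\bD^3_{ii}$ gives the two asserted chains.
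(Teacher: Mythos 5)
Your proof is correct, and for the only nontrivial inequality, $\bD^1_{ii}\le\bD^4_{ii}$, it takes a recognizably different route from the paper. The paper proves this by a single chain of estimates on $\bD^1_{ii}-\bD^4_{ii}$: it applies Lemma~\ref{l-witheta} at $\eta=\tilde\sigma$, introduces $\tilde\sigma'=\max_j\omega_j'$ to control the factor $\tfrac{\omega_j'-1}{\omega_j'}$, and invokes the cancellation $\sum_j(\omega_j-\tilde\sigma)\bA_{ji}^2\le 0$ twice. You instead factor the argument through the per-row bound $\alpha_j^*\le\tfrac{\tau}{\tau-1}\alpha_{1,j}^*$ (the paper's Remark, i.e.\ Lemma~\ref{l-witheta} at $\eta=\omega_j$ together with $\tfrac{\omega_j'-1}{\omega_j'}\le 1$), and then aggregate using the exact identity $\sum_j\omega_j\bA_{ji}^2=v_i\sum_j\bA_{ji}^2$ followed by $v_i\le\tilde\sigma$. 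The two arguments are equivalent in strength, but yours is more modular and avoids introducing $\tilde\sigma'$ altogether. A genuinely valuable point you make explicit is that Lemma~\ref{l-witheta} is stated only for $1\le\eta\le s$, whereas $\omega_j$ (and indeed $\tilde\sigma$, which the paper itself uses as $\eta$) can exceed $s$; your slope comparison ($\tfrac{s-\tau}{s(s-1)}\le\tfrac{1}{s-1}$) shows the inequality persists for all $\eta\ge 1$, which patches a gap the paper leaves implicit. Your handling of the remaining inequalities ($\bD^2\le\bD^4$ via Lemmas~\ref{l-betabeta1} and~\ref{l-newupperboundonsigma}, and $\bD^4\le\bD^3$ via $\tilde\sigma\le\max_j\omega_j$ and $\tfrac{\tau}{\tau-1}\le 2$) coincides with what the paper relies on when it says those relations are ``already proved.''
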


\noindent {\em Proof.} We only need to show that $\bD_{ii}^1\leq \bD_{ii}^ 4$, the other relations are already proved.
It follows from~\eqref{a-vi} that
\begin{align}\label{a-omegajsigma2}
 \textstyle \sum_{j=1}^n (\omega_j-\tilde \sigma)\bA_{ji}^2 \leq 0 \enspace.
\end{align}
holds for all $i$. Moreover, letting
\begin{equation}\label{eq:sjs8sj}\tilde \sigma'=\max_j \omega_j'\end{equation}
and using Lemma~\ref{l-witheta} with $\eta = \tilde \sigma$, we get
\begin{align}\label{a-tauxtauminus}
(\tfrac{\tau}{s}-\tfrac{\tau-1}{s-1})\tfrac{\tilde\sigma'-1}{\tilde\sigma'}\tilde \sigma \leq \tfrac{1}{\tau-1}( 1+\tfrac{(\tilde \sigma -1)(\tau-1)}{s-1}).
\end{align}
Now, for all $i$, we can write $\bD^{1}_{ii}-\bD^{4}_{ii}=$
\begin{align*}
&\overset{\eqref{a-bDiibetaj}+\eqref{eq-alphaj}+\eqref{a-Diibetauppernew}}{=}\sum_{j=1}^n \bA_{ji}^2\left( \tfrac{\tau-1}{s-1}(\omega_j-\tilde \sigma)\right. +\\& \qquad
\left.(\tfrac{\tau}{s}-\tfrac{\tau-1}{s-1})\tfrac{\omega'_j-1}{\omega'_j}\omega_j-\tfrac{1}{\tau-1}( 1+\tfrac{(\tilde\sigma-1)(\tau-1)}{s-1})\right)\\
&\overset{\eqref{a-omegajsigma2}}{\leq} \sum_{j=1}^n\left((\tfrac{\tau}{s}-\tfrac{\tau-1}{s-1})\tfrac{\omega'_j-1}{\omega'_j}\omega_j-\tfrac{1}{\tau-1}( 1+\tfrac{(\tilde\sigma-1)(\tau-1)}{s-1})\right)\bA_{ji}^2\\
&\overset{\eqref{eq:sjs8sj}+\eqref{a-tauxtauminus}}{\leq}(\tfrac{\tau}{s} -\tfrac{\tau-1}{s_1})\tfrac{\tilde\sigma'-1}{\tilde\sigma'} \sum_{j=1}^ n (\omega_j-\tilde \sigma)\bA_{ji}^2
\overset{\eqref{a-omegajsigma2}}{\leq} 0\enspace. \quad \quad \qed
\end{align*}

Note that we do not have a simple relation between $\{\bD_{ii}^1\}_{i=1}^d$ and $\{\bD_{ii}^2\}_{i=1}^d$. Indeed, for some coordinates $i$ the new stepsize parameters $\{\bD_{ii}^1\}_{i=1}^d$ could be smaller than $\{\bD_{ii}^2\}_{i=1}^d$, see Figure~\ref{fig:svm_daul_evolution2}.
in the next section for an illustration.

  \section{Numerical Experiments}
In this section we present  preliminary computational results involving real and synthetic data sets. In particular,   we first compare the relative benefits of the 4 stepsize rules, and then demonstrate that  \DACCORD can significantly outperform Hydra on a big data non-strongly convex problem involving 50 billion variables.

The experiments were performed on two problems:
\begin{enumerate}
 \item {\bf Dual of SVM:} This problem can be formulated as finding $x \in [0,1]^d$ that  minimizes
\begin{align*}
L(x)=\frac{1}{2\lambda d^2}\sum_{j=1}^n (\sum_{i=1}^d b^i \bA_{ji}x^i)^2 - \frac{1}{d} \sum_{i=1}^d x^i + I(x),
\end{align*} Here $d$ is the number of examples, each row of the matrix $\bA$ corresponds to a feature, $b\in \R^d$, $\lambda>0$ and $I(\cdot)$ denotes the indicator function of the set $[0,1]^d$:
\[
 I(x) = \begin{cases} 0& \text{ if } x\in [0,1]^d, \\ +\infty & \text{ if } x \not \in [0,1]^d.\end{cases}
\]
 \item {\bf LASSO:}
 \begin{equation}L(x)=\frac{1}{2}\norm{\bA x-b}^2+\lambda \norm{x}_1.
 \label{lasso}
 \end{equation}
\end{enumerate}
 The computation were performed on  Archer (\url{http://archer.ac.uk/}),
which is UK's leading supercomputer.

\subsection{Benefit from the new stepsizes}

In this section we compare the four different stepsize parameters and show how they influence the convergence of \DACCORD . In the experiments of this section we used the \emph{astro-ph} dataset.\footnote{Astro-ph  is a binary classification problem which consists of abstracts of papers from physics.
The dataset consists of $d=29,882$ samples and the feature space has dimension $n=99,757$.
Note that for the SVM dual problem, coordinates correspond to samples whereas for the LASSO problem, coordinates correspond to features.} The problem that we solve is the SVM dual problem (for more details, see~\cite{shalev2013stochastic,takavc2013mini}).

Figure \ref{fig:svm_daul_evolution2}
plots the values of $\bD^1_{ii}$, $\bD^2_{ii}$, $\bD^3_{ii}$ and $\bD^4_{ii}$ for $i$ in $\{1,\dots,d\}$
and $(c,\tau)=(32,10)$. The dataset we consider is normalized (the diagonal elements of $\bA^\top\bA$ are all equal to 1), and
the parameters $ \{\bD^2_{ii}\}_i$ are equal to a constant for all $i$  (the same holds for $\{\bD^3_{ii}\}_i$ and $\{\bD^4_{ii}\}_i$).
Recall that according to Theorem~\ref{thm:main},
a faster convergence rate is guaranteed if smaller (but theoretically admissible) stepsize parameters $\{\bD_{ii}\}_{i}$ are used. Hence it is clear that
$\{\bD^1_{ii}\}_i$ and $\{\bD^2_{ii}\}_i$ are better choices than $\{\bD^3_{ii}\}_i$ and $\{\bD^4_{ii}\}_i$. However, as we mentioned,
 computing the parameters
$\{\bD^1_{ii}\}_i$ would require a considerable computational effort, while computing $\{\bD^2_{ii}\}_i$ is as easy as passing once
through the dataset. For this relatively small dataset, the  time is about 1 minute for computing $\{\bD^2_{ii}\}_{i}$ and less than 1 second
for $\{\bD^1_{ii}\}_{i}$.

\begin{figure}[htp]
 \centering
  \includegraphics[scale=0.4]{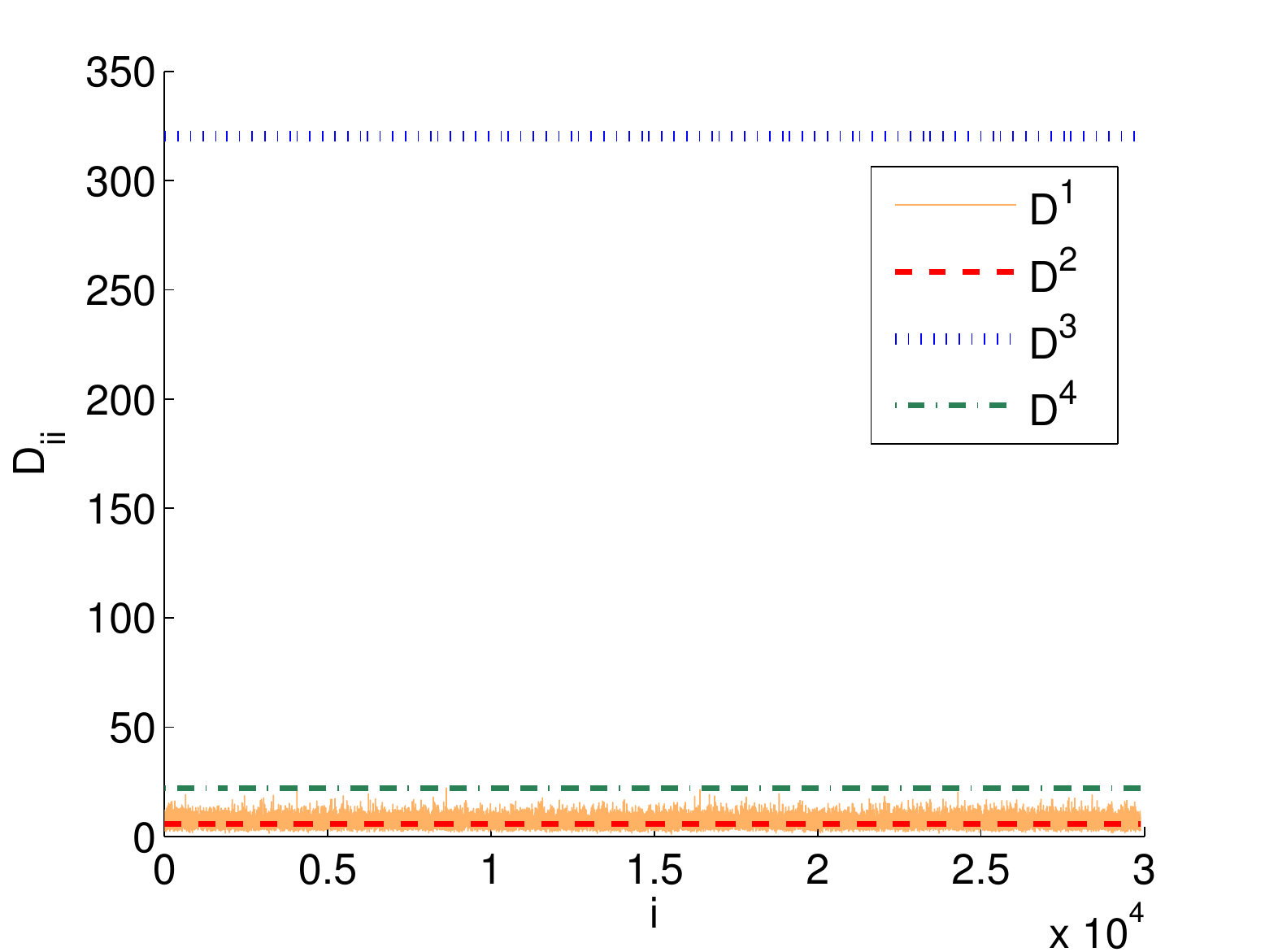}
 \caption{Plots of  $i$ v.s.  $\bD^1_{ii}$,  $i$ v.s. $\bD^2_{ii}$, $i$ v.s. $\bD^3_{ii}$ and $i$ v.s. $\bD^4_{ii}$ }
 \label{fig:svm_daul_evolution2}
\end{figure}

In order to investigate the benefit of the new stepsize parameters, we solved the SVM dual problem
 on the \emph{astro-ph} dataset for $(c,\tau)=(32,10)$. Figure~\ref{fig:svm_daul_evolution} shows evolution of the duality gap, obtained by using the four  stepsize parameters mentioned
previously.
We see clearly that smaller stepsize parameters lead to faster convergence, as  predicted by Theorem~\ref{thm:main}.
Moreover, using our easily computable new stepsize parameters $\{\bD_{ii}^1\}_i$, we achieve
comparable convergence speed with respect to the existing parameters $\{\bD_{ii}^2\}_i$.

\begin{figure}[htp]
 \centering
  \includegraphics[width=5cm]{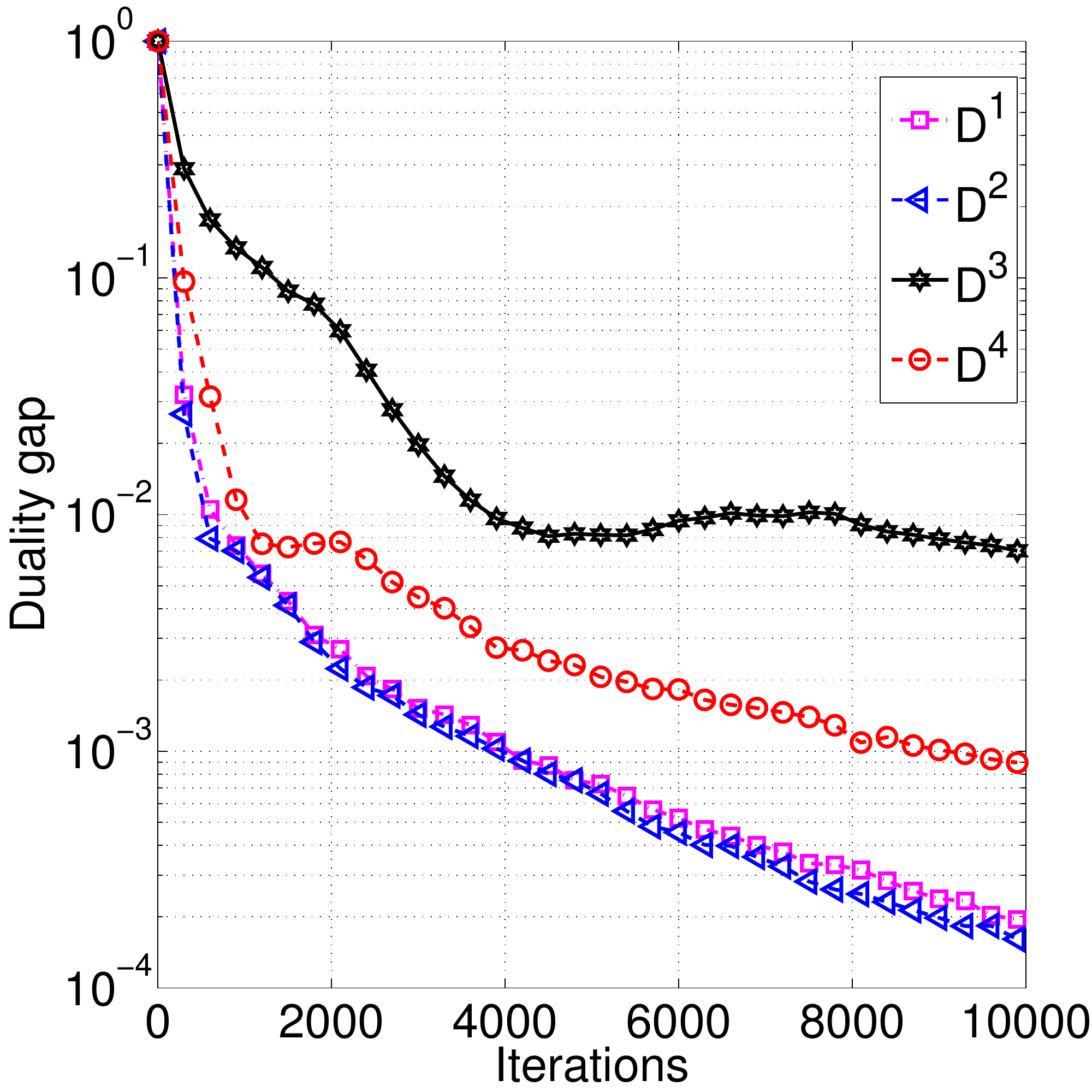}
 \caption{Duality gap v.s. number of iterations, for 4 different stepsize parameters. }
 \label{fig:svm_daul_evolution}
\end{figure}

\subsection{\DACCORD vs Hydra} 
In this section, we report experimental results comparing Hydra with \DACCORD on a synthetic big data LASSO problem. We generated a sparse matrix $\bA$ having the same block angular structure as the one used in~\cite[Sec 7.]{richtarik2013distributed},
with $d = 50$ billion, $c=256$, $s=195,312,500$ (note that $d=cs$) and $n=5,000,000$. 
  The average number of nonzero elements per row of $\bA$ (i.e., $\sum_{j} \omega_j /n$)
 is $60,000$, and the maximal number of nonzero elements in a row of $\bA$ (i.e., $\max_j \omega_j$) is $1,993,419$.  The dataset size is $5$TB.

We have used 128 physical nodes of the ARCHER\footnote{ \url{www.archer.ac.uk}} supercomputing
facility, which is based on Cray XC30 compute nodes connected via Aries interconnect. On each physical node we have run two MPI processes -- one process per NUMA (NonUniform Memory Access) region. Each NUMA region has 2.7 GHz, 12-core E5-2697 v2 (Ivy Bridge) series processor and each core supports 2 hardware threads (Hyperthreads). In order to minimize communication we have chosen $\tau=s/1000$ (hence each thread computed an update for
8,138 coordinates during one iteration, on average).

 We show in Figure~\ref{fig:lassiterNEW} the decrease of the error with respect to the number of
iterations, plotted in log scale. It is clear that \DACCORD provides faster iteration convergence rate than Hydra.
Moreover, we see from  Figure~\ref{fig:lassoNEW} that  the speedup in terms of the number of iterations is significantly large
so that \DACCORD converges faster in time than Hydra even though the run time of \DACCORD per iteration is on average two times more expensive.


\begin{figure}[htb]
 \centering
  \includegraphics[width=5cm]{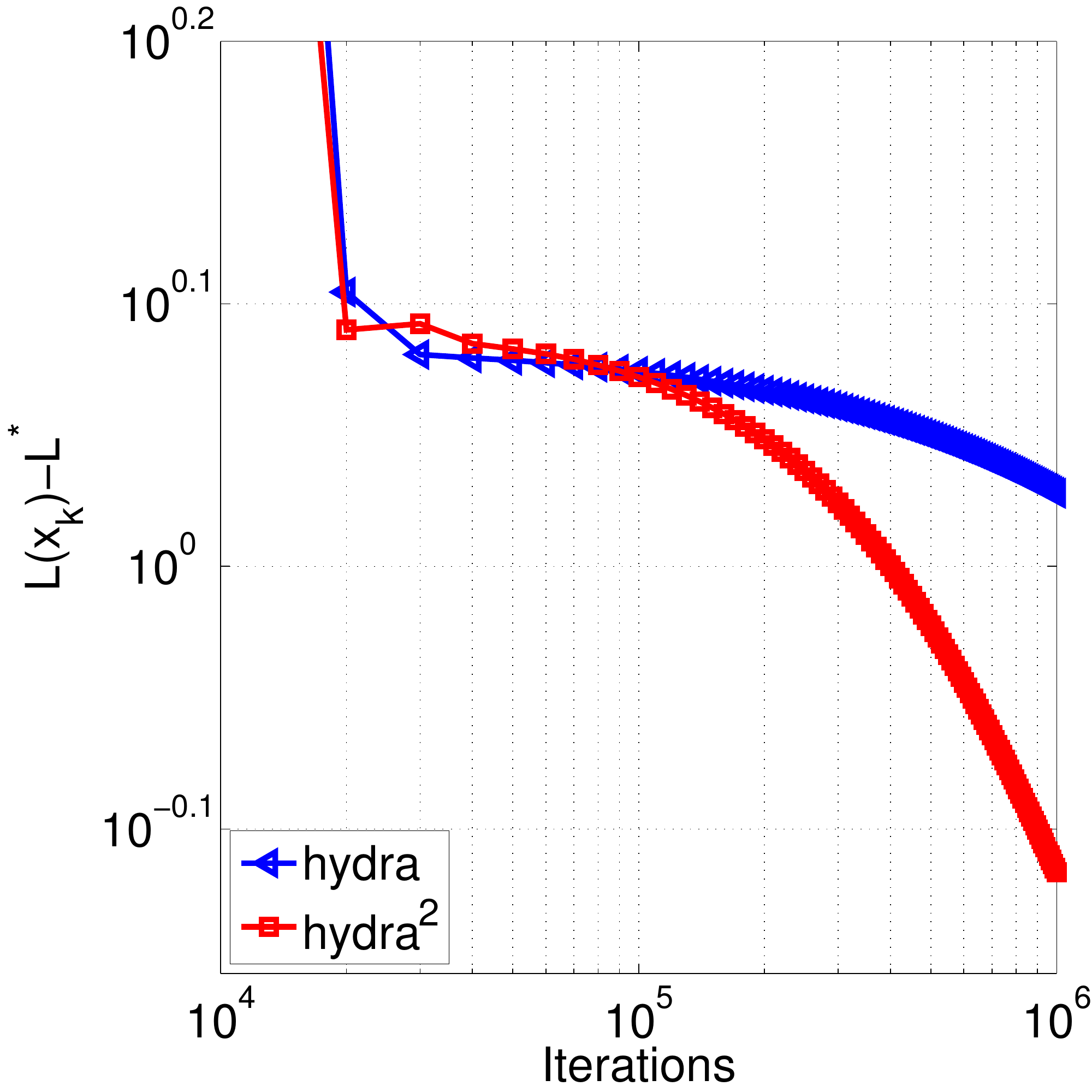}
\caption{Evolution of $L(x_k)-L^*$ in number of iterations.}
\label{fig:lassiterNEW}
\includegraphics[width=5cm]{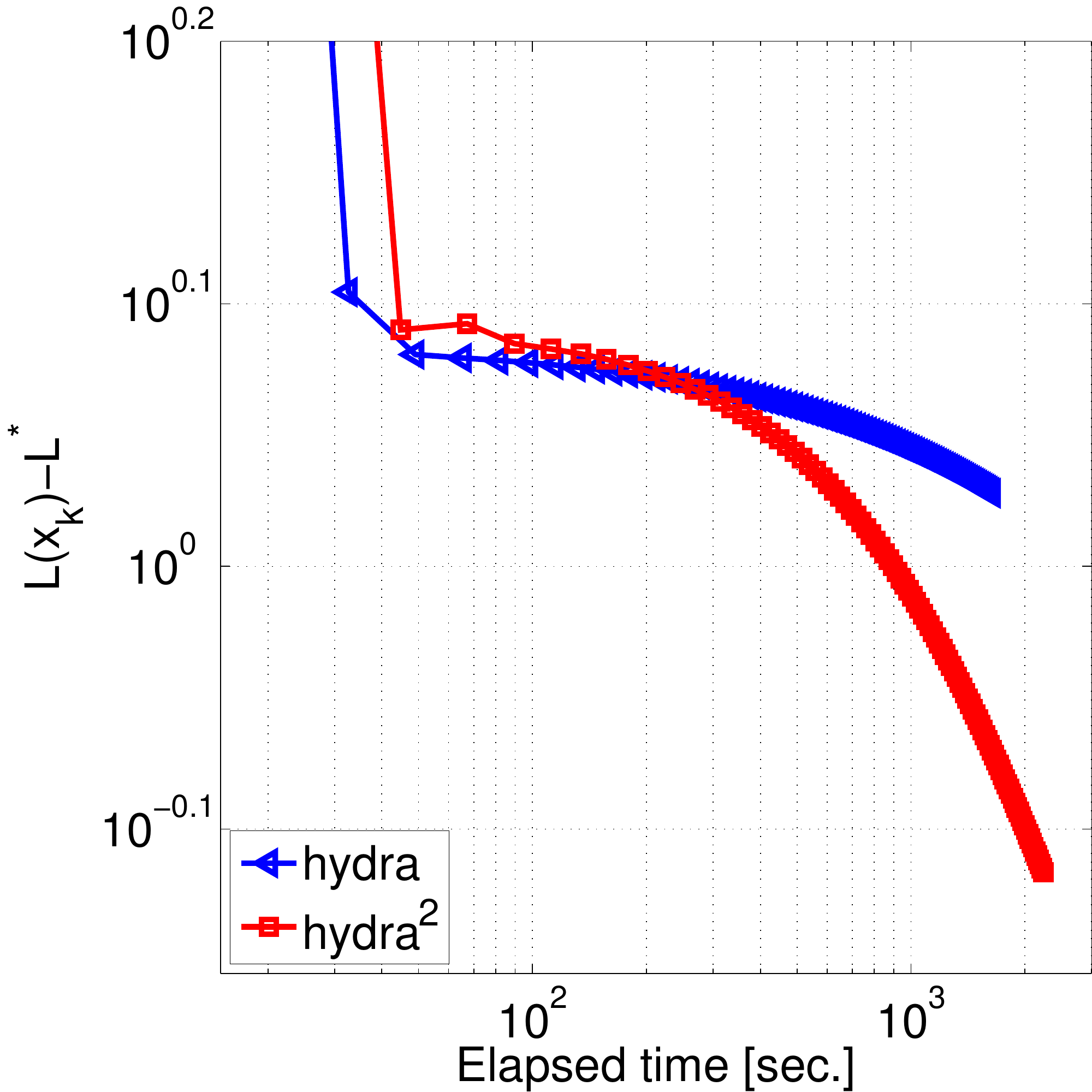}
 \caption{Evolution of $L(x_k)-L^*$ in time.}
 \label{fig:lassoNEW}
\end{figure}

\small
 
\bibliographystyle{IEEEbib}
\bibliography{biblio}

\begin{thebibliography}{10}

\bibitem{bradley2011}
Joseph~K. Bradley, Aapo Kyrola, Danny Bickson, and Carlos Guestrin,
\newblock ``Parallel coordinate descent for l1-regularized loss minimization,''
\newblock in {\em ICML 2011}, 2011.

\bibitem{richtarik2012parallel}
Peter Richt{\'a}rik and Martin Tak{\'a}{\v{c}},
\newblock ``Parallel coordinate descent methods for big data optimization,''
\newblock {\em arXiv:1212.0873}, 2012.

\bibitem{richtarik2013distributed}
Peter Richt{\'a}rik and Martin Tak{\'a}{\v{c}},
\newblock ``Distributed coordinate descent method for learning with big data,''
\newblock {\em arXiv:1310.2059}, 2013.

\bibitem{fercoq2013accelerated}
Olivier Fercoq and Peter Richt{\'a}rik,
\newblock ``Accelerated, parallel and proximal coordinate descent,''
\newblock {\em arXiv:1312.5799}, 2013.

\bibitem{nesterov1983method}
Yurii Nesterov,
\newblock ``A method of solving a convex programming problem with convergence
  rate ${O}(1/k^2)$,''
\newblock {\em Soviet Mathematics Doklady}, vol. 27, no. 2, pp. 372--376, 1983.

\bibitem{tseng2008accelerated}
Paul Tseng,
\newblock ``On accelerated proximal gradient methods for convex-concave
  optimization,''
\newblock {\em Submitted to SIAM Journal on Optimization}, 2008.

\bibitem{beck2009fista}
Amir Beck and Marc Teboulle,
\newblock ``A fast iterative shrinkage-thresholding algorithm for linear
  inverse problems,''
\newblock {\em SIAM Journal on Imaging Sciences}, vol. 2, no. 1, pp. 183--202,
  2009.

\bibitem{fercoq2013smooth}
Olivier Fercoq and Peter Richt{\'a}rik,
\newblock ``Smooth minimization of nonsmooth functions with parallel coordinate
  descent methods,''
\newblock {\em arXiv:1309.5885}, 2013.

\bibitem{NumericalLA}
Gr{\'e}goire Allaire and Sidi~Mahmoud Kaber,
\newblock {\em Numerical linear algebra}, vol.~55 of {\em Texts in Applied
  Mathematics},
\newblock Springer, New York, 2008,
\newblock Translated from the 2002 French original by Karim Trabelsi.

\bibitem{shalev2013stochastic}
Shai Shalev-Shwartz and Tong Zhang,
\newblock ``Stochastic dual coordinate ascent methods for regularized loss,''
\newblock {\em The Journal of Machine Learning Research}, vol. 14, no. 1, pp.
  567--599, 2013.

\bibitem{takavc2013mini}
Martin Tak{\'a}{\v{c}}, Avleen Bijral, Peter Richt{\'a}rik, and Nathan Srebro,
\newblock ``Mini-batch primal and dual methods for {SVM}s,''
\newblock in {\em ICML 2013}, 2013.

\end{thebibliography}

\end{document}